\title[Isotopy Classification of Engel Structures]
{Isotopy Classification of Engel Structures on Circle Bundles}
\author{Mirko Klukas}
\address{Mathematisches Institut der Universit\"at zu K\"oln, 
Weyertal 86-90, 50931 K\"oln}
\urladdr{http://www.mi.uni-koeln.de/$\sim$mklukas}
\author{Bijan Sahamie}
\address{Mathematisches Institut der LMU M\"unchen, Theresienstrasse 
39, 80333 M\"unchen Germany}
\email{sahamie@math.lmu.de}
\urladdr{http://www.math.lmu.de/$\sim$sahamie}
\address{Current Address: Stanford University, 450 Sloan Hall (Building 380), Stanford, CA-94043}
\theoremstyle{plain} 
\newtheorem{theorem}{Theorem}[section]   
\newtheorem{lem}[theorem]{Lemma}         
\newtheorem{prop}[theorem]{Proposition}
\newtheorem{cor}[theorem]{Corollary}
\theoremstyle{definition}
\newtheorem{definition}[theorem]{Definition}   
\newtheorem{rem}{Remark}
\newcommand{\Z}{\mathbb{Z}}
\newcommand{\N}{\mathbb{N}}
\newcommand{\mL}{\mathcal{L}}
\newcommand{\PD}{\mbox{\rm PD}}
\newcommand{\distinv}{\mathfrak{twist}}
\newcommand{\f}{\mbox{\rm f}}
\newcommand{\smalld}{\mbox{\rm\small f}}
\newcommand{\done}{\f_{(\phi_1,\phi_2)}}
\renewcommand{\d}{\mbox{\rm d}}
\newcommand{\eng}{\mbox{\rm Eng}}
\newcommand{\mD}{\mathcal{D}}
\newcommand{\sone}{\mathbb{S}^1}
\newcommand{\lra}{\longrightarrow}
\newcommand{\co}{\colon\thinspace}
\newcommand{\lmt}{\longmapsto}
\newcommand{\pfat}{{\boldsymbol p}}
\newcommand{\Hom}{\mbox{\rm Hom}}
\newcommand{\pxi}{\mathbb{P}\xi}
\newcommand{\covn}{\mbox{\rm Cov}_n}
\newcommand{\im}{\mbox{\rm im}}
\numberwithin{equation}{section}
\begin{document}

\begin{abstract} We call two Engel structures isotopic if they are
homotopic through Engel structures by a homotopy that fixes the
characteristic line field. In the present paper we define an isotopy 
invariant of Engel structures on oriented circle bundles over 
closed oriented $3$-manifolds and apply it to give an isotopy
classification of Engel structures on circle bundles with characteristic
line field tangent to the fibers. 
\end{abstract}
\maketitle
%
\fontsize{11}{14}\selectfont

\section{Introduction}
The present article deals with Engel structures on circle bundles 
over closed oriented $3$-manifolds. For an introduction to 
basic notions of Engel structures we point the reader to 
\cite[\S 2.2]{Monty}. Our focus will lie on Engel structures with 
characteristic line field tangent to the fibers of
the bundle. As mentioned in the abstract, we call two Engel 
structures {\bf isotopic} if they are homotopic through Engel 
structures via a homotopy that fixes the characteristic line field. 
This notion is motivated by a result of Golubev who proved the existence
of a version of Gray-stability for such homotopies (see~\cite{Gol} or 
\cite[Theorem~3.50]{Vogel2}).
Let $M$ be a closed oriented $3$-manifold and 
$Q\lra M$ an oriented  circle bundle. 
For an Engel structure $\mD$ 
on $Q$ with induced contact structure
$\xi$ on $M$ we denote by $\phi_\mD$ its
associated development map (see~\S \ref{sec:engelclass}). For the class of Engel
structures we are considering, Engel structures and their
development maps can be considered as equivalent objects. 
Development maps are fiberwise covering maps as
observed in \cite{KlSa} and, therefore, to understand Engel structures
on circle bundles we can equivalently try to understand fiberwise
covering maps. To this end, in 
\S\ref{sec:hordist} we define a homotopy invariant of
fiberwise covering maps we call the horizontal 
distance (see~Definition~\ref{def:hordist}). This 
distance --~as the name 
suggests~-- measures {\it how far apart} two fiberwise coverings 
are, homotopically. In \S\ref{sec:class} we proceed with 
a homotopy classification of fiberwise covering maps by applying
the horizontal distance (in arbitrary dimensions), 
where the homotopy goes through fiberwise covering maps. 
The $4$-dimensional case of fiberwise coverings
is then applied in \S \ref{sec:engelclass} to the development maps 
of Engel structures:
For Engel structures with characteristic line field tangent to the
fibers there exists a natural 
analogue of the horizontal distance we denote by 
$\distinv$ (see~\S\ref{sec:engelclass}).
Via the development maps, the invariant $\distinv$ is 
identified with the horizontal distance.  This 
allows us to apply the homotopy classification
of fiberwise covering maps to prove the 
following Theorem~\ref{thm:first}. Here, we denote by $tw(\mD)$ 
the twisting number of an Engel structure $\mD$.
\begin{theorem}\label{thm:first} Let $Q$ be an oriented circle 
bundle over a closed oriented $3$-manifold $M$. Two Engel 
structures $\mD$ and $\mD'$ with characteristic line field tangent 
to the fibers are isotopic if and only if $tw(\mD)=tw(\mD')$, 
their induced contact structures on the base agree and 
$\distinv(\mD,\mD')=0$.
\end{theorem}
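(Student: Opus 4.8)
The plan is to reduce the isotopy classification of Engel structures to the homotopy classification of fiberwise covering maps established in §\ref{sec:class}, using the development map as the translating device. Since the paper asserts that, for the class of Engel structures under consideration, an Engel structure $\mD$ and its development map $\phi_\mD$ are equivalent data, the key is to show that the three numerical/geometric conditions on the right-hand side of the equivalence correspond exactly to the hypotheses under which two fiberwise coverings are homotopic through fiberwise coverings. Let me think about how the pieces fit.

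First I would verify the \emph{forward} direction. Suppose $\mD$ and $\mD'$ are isotopic, i.e.\ homotopic through Engel structures by a homotopy fixing the characteristic line field. Because the characteristic line field is tangent to the fibers throughout, the induced contact structure $\xi$ on the base $M$ is carried along, so an isotopy forces the induced contact structures to agree (at least up to the appropriate equivalence); similarly the twisting number $tw(\mD)$ is an isotopy invariant and must be preserved. The remaining content is that an Engel isotopy induces a homotopy of the associated development maps through fiberwise coverings, and since $\distinv$ is identified with the horizontal distance --- which by design is a homotopy invariant of fiberwise coverings (Definition~\ref{def:hordist}) --- we get $\distinv(\mD,\mD')=0$. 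This direction is largely a matter of tracking invariance and is the easier half.

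The substantive direction is the \emph{converse}: assuming $tw(\mD)=tw(\mD')$, the induced contact structures agree, and $\distinv(\mD,\mD')=0$, I would construct an isotopy. The strategy is to pass to development maps $\phi_\mD$ and $\phi_{\mD'}$, which are fiberwise coverings of the same degree (the degree being controlled by the twisting number and the bundle data). The agreement of the induced contact structures ensures the two development maps are fiberwise coverings over the \emph{same} base contact geometry, so they are candidates for the homotopy classification. The condition $\distinv(\mD,\mD')=0$, once identified with vanishing horizontal distance, is precisely the hypothesis guaranteeing that the classification theorem of §\ref{sec:class} yields a homotopy from $\phi_\mD$ to $\phi_{\mD'}$ \emph{through fiberwise coverings}. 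I would invoke that classification result directly.

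The main obstacle --- and the step I would treat most carefully --- is upgrading the resulting homotopy of development maps back into a genuine \emph{Engel isotopy} fixing the characteristic line field. A homotopy through fiberwise coverings corresponds under the development-map dictionary to a homotopy through Engel structures, but I must ensure (i) that the correspondence is functorial enough that a homotopy on one side produces a homotopy on the other, and (ii) that the resulting path of Engel structures genuinely fixes the characteristic line field, which is what the definition of isotopy demands. Here is where Golubev's Gray-type stability result (cited in the introduction) enters: it guarantees that homotopies fixing the characteristic line field can be integrated to an ambient isotopy, legitimizing the passage from a formal homotopy to an honest isotopy. I would therefore (a) confirm the development-map equivalence respects homotopies fixing the characteristic line field, (b) apply §\ref{sec:class} to obtain the fiberwise-covering homotopy, and (c) translate back and invoke Golubev stability to conclude the two Engel structures are isotopic.
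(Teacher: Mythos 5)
Your reduction to the homotopy classification of fiberwise coverings via development maps is the same skeleton as the paper's proof, but you have placed the one substantive tool, Golubev's Gray-type stability, in the wrong half of the argument, and this leaves a genuine gap in the forward direction. You dismiss that direction as ``tracking invariance,'' but a homotopy $(\mD_t)_{t\in[0,1]}$ through Engel structures fixing the characteristic line field only gives, a priori, a \emph{path} of induced contact structures $\xi_t$ on $M$; nothing in your argument makes this path constant. Consequently the development maps $\phi_{\mD_t}$ have varying targets $\pxi_t$, so it does not even make sense to say they form a homotopy through fiberwise coverings, and the theorem asserts the equality $\xi_0=\xi_1$ on the nose --- your hedge ``at least up to the appropriate equivalence'' is exactly the gap. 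The paper closes it with Golubev: stability produces an ambient isotopy $\psi_t$ with $(\psi_t)_*\mD_0=\mD_t$, which is an automorphism of the circle bundle because the characteristic line field is fixed; hence $\xi_t=\pi_*(\psi_t)_*\mathcal{E}_0=\pi_*\mathcal{E}_0=\xi_0$, and the commuting triangle $\phi_{\mD_t}\circ\psi_t=\phi_{\mD_0}$ exhibits $\phi_{\mD_0}$ and $\phi_{\mD_1}$ as homotopic through fiberwise coverings, giving $\distinv(\mD_0,\mD_1)=0$.

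In the converse direction your appeal to Golubev is both circular and unnecessary: that theorem takes as \emph{input} a homotopy of Engel structures fixing the characteristic line field --- exactly what you still need to construct at your step (c) --- and outputs an ambient isotopy, which the statement never requires, since ``isotopic'' is \emph{defined} in this paper as homotopic through Engel structures fixing the characteristic line field. The missing translation step is concrete and elementary: Lemma~\ref{lem:homotopy} gives a homotopy $(\phi_t)_{t\in[0,1]}$ through fiberwise coverings from $\phi_{\mD_0}$ to $\phi_{\mD_1}$ (the twisting numbers agree, so the numbers of sheets do); since each $\phi_t$ is a local diffeomorphism, there is a unique $2$-plane field $\mD_t$ on $Q$ with $(\phi_t)_*\mD_t=\mD\xi_0$, where $\mD\xi_0$ is the prolonged Engel structure on $\pxi_0$. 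Each $\mD_t$ is then Engel with characteristic line field tangent to the fibers, and the endpoints recover $\mD_0$ and $\mD_1$ because a development map pulls the prolonged structure back to the Engel structure it came from. This family $(\mD_t)$ \emph{is} the required isotopy; no stability result is needed anywhere in this direction.
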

The theorem provides us with a set of invariants with which we 
can decide whether or not two given Engel structures are 
isotopic. To obtain a classification, we have to 
determine which sets of invariants can be realized by Engel 
structures. To this end, denote by $\eng(Q)$ 
the isotopy classes of Engel structures on $Q$ with characteristic 
line field tangent to the fibers. 
For every contact 
structure $\xi$ on $M$ we denote by 
$\eng^n_{\xi}(Q)\subset\eng(Q)$ the isotopy classes of Engel 
structures with twisting number $n$ and induced contact structure $\xi$
on the base $M$. Similarly, we denote by 
$\eng^n_{\xi;o}(Q)\subset\eng^n_{\xi}(Q)$ the subset of 
isotopy classes of oriented Engel structures and 
$\eng^n_{\xi;no}(Q)\subset\eng^n_{\xi}(Q)$ the subset of 
non-orientable Engel structures.

\begin{theorem}\label{thm:main} Let $Q$ be an oriented circle 
bundle over a closed oriented $3$-manifold $M$.
\begin{enumerate}
\item[(i)] For a number $n$ and a contact structure $\xi$ on $M$ the set 
$\eng^n_{\xi}(Q)$ is non-empty if and only if $n\cdot e(Q)=2\cdot e(\xi)$.
\item[(ii)] The set $\eng^n_{\xi;o}(Q)$ is non-empty if and only if
$n$ is even and $n/2\cdot e(Q)=e(\xi)$.
\item[(iii)] In case $\eng^n_{\xi}(Q)$ is non-empty, there is a 
simply-transitive $H^1(M;\Z)$-action on the set $\eng^n_{\xi}(Q)$. 
Furthermore, if $\eng^n_{\xi;o}(Q)$ is non-empty, the action on
$\eng^n_{\xi}(Q)$ descends to a simply-transitive 
$2\cdot H^1(M;\Z)$-action on $\eng^n_{\xi;o}(Q)$.
\end{enumerate}
\end{theorem}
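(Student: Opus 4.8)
The plan is to translate the entire statement into the language of fiberwise covering maps via the development map and then combine Theorem~\ref{thm:first} with the homotopy classification of \S\ref{sec:class}. Recall from the introduction that, for the Engel structures under consideration, passing to the development map $\phi_{\mD}\co Q\to\pxi$ sets up a dictionary in which isotopy corresponds to homotopy through fiberwise coverings, the twisting number $tw(\mD)$ becomes the fiber degree of $\phi_{\mD}$, the induced contact structure $\xi$ determines the target bundle $\pxi\to M$, and $\distinv$ is identified with the horizontal distance of Definition~\ref{def:hordist}. Under this dictionary, parts~(i) and~(ii) become existence statements for fiberwise coverings of prescribed fiber degree realizing given Euler data, while part~(iii) asserts that the horizontal distance endows the set of such coverings with the structure of an $H^1(M;\Z)$-torsor.

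For the Euler-class conditions I would first record the identity $e(\pxi)=2\,e(\xi)$. This follows from the fiberwise double cover $S\xi\to\pxi$, $v\mapsto[v]$, which on each fiber is the squaring map $\sone\to\sone$; equivalently $\pxi$ is isomorphic as an oriented circle bundle to the unit bundle of $\xi^{\otimes 2}$. A \v{C}ech computation then shows that a degree-$n$ fiberwise covering $Q\to\pxi$ forces $e(\pxi)=n\cdot e(Q)$, so that $n\cdot e(Q)=2\,e(\xi)$; conversely, whenever this relation holds one can build such a covering and, invoking the equivalence of \S\ref{sec:engelclass}, realize it by an Engel structure with $tw=n$ and induced contact structure $\xi$. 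This proves~(i). For~(ii) I would use that orientability of $\mD$ is equivalent to $\phi_{\mD}$ lifting through the double cover $S\xi\to\pxi$ to a fiberwise covering $Q\to S\xi$; its fiber degree is then $n/2$, forcing $n$ to be even, and the same \v{C}ech computation applied to $S\xi$ (with $e(S\xi)=e(\xi)$) gives $(n/2)\cdot e(Q)=e(\xi)$.

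For part~(iii) I would exploit the additivity of the horizontal distance, namely $\distinv(\mD,\mD'')=\distinv(\mD,\mD')+\distinv(\mD',\mD'')$ with values in $H^1(M;\Z)$. Fixing $\mD\in\eng^n_{\xi}(Q)$, the assignment $\mD'\mapsto\distinv(\mD,\mD')$ is injective by Theorem~\ref{thm:first} (two structures sharing $n$ and $\xi$ are isotopic precisely when their distance vanishes) and surjective by the classification of fiberwise coverings in \S\ref{sec:class}. Defining $c\cdot\mD$ to be the unique isotopy class with $\distinv(\mD,c\cdot\mD)=c$ therefore yields a well-defined $H^1(M;\Z)$-action on $\eng^n_{\xi}(Q)$; freeness follows from injectivity and transitivity from setting $c=\distinv(\mD,\mD')$, so the action is simply-transitive.

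It remains to treat the oriented case, which I expect to be the main obstacle because of the bookkeeping surrounding the double cover $S\xi\to\pxi$. Oriented structures correspond to coverings factoring through $S\xi$, and the horizontal distance of two such lifts, measured downstairs in $\pxi$, is the image under the degree-two fiber map of their distance upstairs; concretely this image lands in $2\cdot H^1(M;\Z)\subset H^1(M;\Z)$. One must check that this subgroup is exactly the set of classes preserving the subset $\eng^n_{\xi;o}(Q)$, i.e. that the $H^1(M;\Z)$-action keeps a structure oriented precisely on the even sublattice, and that the induced action remains simply-transitive. Granting this compatibility, the torsor structure restricts to the claimed simply-transitive $2\cdot H^1(M;\Z)$-action on $\eng^n_{\xi;o}(Q)$, completing part~(iii).
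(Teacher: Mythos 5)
Your proposal is correct and follows essentially the same route as the paper: reduce to fiberwise coverings via the development-map dictionary (Theorem~\ref{thm:first}), apply Theorem~\ref{thm:coveringclassification} together with $e(\pxi)=2\,e(\xi)$ for parts~(i) and~(ii), and handle the oriented case through factorization over the unit bundle $\xi_1$ of $\xi$. The compatibility you defer at the end is exactly the paper's computation $\pi_\xi\circ(\alpha\cdot\phi_1)=(2\alpha)\cdot(\pi_\xi\circ\phi_1)$, which is precisely the ``image under the degree-two fiber map'' identity you already state, so nothing essential is missing.
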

A particularly nice special case of Theorem~\ref{thm:main} are trivial 
circle bundles. In that case we obtain the following result.
\begin{cor}\label{cor:main} The set $\eng(M\times\sone)$ stays in 
one-to-one correspondence with elements in 
$\Z\times\Xi_2(M)\times H^1(M;\Z)$ where $\Xi_2(M)$ denotes the 
set of contact structures on $M$ with first 
chern class a $2$-torsion class. Moreover, the one-to-one 
correspondence establishes a bijection from the isotopy classes of 
oriented Engel structures to the subset 
$2\Z\times\Xi_0(M)\times H^1(M;\Z)
\subset\Z\times\Xi_2(M)\times H^1(M;\Z)$ 
where $\Xi_0(M)$ denotes the set of contact 
structures on $M$ with vanishing first chern class.
\end{cor}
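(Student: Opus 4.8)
The plan is to derive Corollary~\ref{cor:main} directly from Theorem~\ref{thm:main} by specializing to the trivial bundle $Q = M\times\sone$, where $e(Q)=0$. So I need to feed $e(Q)=0$ into the constraints of Theorem~\ref{thm:main} and then assemble the three invariants — twisting number, induced contact structure, and the $H^1(M;\Z)$-torsor coordinate — into a single product description.

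Let me reason through the constraints carefully. When $e(Q)=0$, part~(i) of Theorem~\ref{thm:main} says $\eng^n_\xi(Q)$ is non-empty iff $n\cdot e(Q)=2\cdot e(\xi)$, which reduces to $0 = 2\cdot e(\xi)$, i.e. $e(\xi)$ is $2$-torsion. This is the key point: here $e(\xi)$ must denote the Euler class of the contact structure, which is the first Chern class $c_1(\xi)$ of the associated complex line bundle; so the condition becomes $c_1(\xi)\in H^2(M;\Z)$ being a $2$-torsion class. This is precisely the defining condition for $\xi\in\Xi_2(M)$. Moreover, since $e(Q)=0$, there is \emph{no} constraint relating $n$ to $\xi$: \emph{every} integer $n$ is allowed, independently of the choice of $\xi\in\Xi_2(M)$. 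So the set of realizable pairs $(n,\xi)$ for which $\eng^n_\xi(M\times\sone)$ is non-empty is exactly $\Z\times\Xi_2(M)$.

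First I would establish the bijection set-theoretically. Using part~(iii), for each non-empty $\eng^n_\xi(Q)$ there is a simply-transitive $H^1(M;\Z)$-action, so after choosing a basepoint in each such set we obtain a bijection $\eng^n_\xi(Q)\cong H^1(M;\Z)$. Since $\eng(M\times\sone)=\coprod_{n,\xi}\eng^n_\xi(M\times\sone)$ is the disjoint union over all pairs $(n,\xi)$, and the non-empty pieces are indexed by $(n,\xi)\in\Z\times\Xi_2(M)$, collecting these bijections yields
\[
\eng(M\times\sone)\;\cong\;\coprod_{(n,\xi)\in\Z\times\Xi_2(M)}H^1(M;\Z)
\;=\;\Z\times\Xi_2(M)\times H^1(M;\Z),
\]
which is the first assertion. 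For the oriented statement, I would run the identical argument through part~(ii): with $e(Q)=0$ the condition ``$n$ even and $(n/2)\cdot e(Q)=e(\xi)$'' collapses to ``$n$ even and $e(\xi)=0$,'' i.e. $n\in 2\Z$ and $\xi\in\Xi_0(M)$. The descent clause of part~(iii) gives a simply-transitive $2\cdot H^1(M;\Z)$-action on $\eng^n_{\xi;o}(Q)$; but since $H^1(M;\Z)$ is already the full orbit under the ambient action, choosing basepoints compatibly shows the oriented classes correspond to the sublocus $2\Z\times\Xi_0(M)\times H^1(M;\Z)$.

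The only genuinely delicate point — and where I would spend the most care — is the bookkeeping of basepoints and the compatibility of the torsor identifications across the two statements. A torsor is only canonically a group after choosing an origin, so to claim a single honest bijection (rather than a family of unrelated ones) I must fix a coherent system of basepoints, one in each non-empty $\eng^n_\xi(Q)$, and verify that the oriented classes sit inside the oriented pieces \emph{without a shift} of the $H^1(M;\Z)$-coordinate; this is what makes the oriented subset appear as $2\Z\times\Xi_0(M)\times H^1(M;\Z)$ with the \emph{full} last factor rather than a coset. Concretely, I would arrange the basepoint in each oriented piece $\eng^n_{\xi;o}(Q)$ to be the chosen basepoint of the ambient $\eng^n_\xi(Q)$ (which is legitimate because an oriented class is in particular a class), so that the inclusion $\eng^n_{\xi;o}(Q)\hookrightarrow\eng^n_\xi(Q)$ becomes the identity on the $H^1(M;\Z)$-factor. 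Everything else is routine unwinding of the Euler-class arithmetic at $e(Q)=0$.
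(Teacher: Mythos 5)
Your reduction of the first statement is correct and is essentially the paper's: with $e(M\times\sone)=0$, Theorem~\ref{thm:main}(i) shows the non-empty pieces are exactly those indexed by $\Z\times\Xi_2(M)$, and Theorem~\ref{thm:main}(iii) makes each piece an $H^1(M;\Z)$-torsor. The oriented half, however, contains a genuine error, located exactly at the step you single out as delicate. You claim that, after choosing the basepoint of each ambient piece $\eng^n_{\xi}(M\times\sone)$ to be an oriented class, the inclusion $\eng^n_{\xi;o}\subset\eng^n_{\xi}$ ``becomes the identity on the $H^1(M;\Z)$-factor,'' so that the oriented classes occupy the \emph{full} last factor. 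This contradicts the clause of Theorem~\ref{thm:main}(iii) that you quote in the same sentence: the oriented classes form a single orbit of the subgroup $2\cdot H^1(M;\Z)$, so with an oriented basepoint they are identified with the subgroup $2\cdot H^1(M;\Z)\subset H^1(M;\Z)$ --- eliminating the shift makes the coset the \emph{trivial} coset, not the whole group --- and this is a proper subset whenever $H^1(M;\Z)\neq 0$. The paper's introductory example already exhibits the failure: on $T^4=T^3\times\sone$ take $n$ even and $\alpha=(1,0,0)$; then $\mD^n_\alpha(\xi)$ has even twisting number and $c_1(\xi)=0$, but traversing $\gamma_1$ changes the angle $\pi\big(n\theta+\left<\alpha,\pfat\right>\big)$ by $\pi$, so the spanning vector field of $\mD^n_\alpha(\xi)$ inside $\xi$ reverses sign and the plane field is non-orientable ($w_1\neq 0$, which is invariant under isotopy). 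This class lies in the locus you declare to be oriented, yet it is not an oriented class.

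What makes the corollary true --- and what your bookkeeping must be replaced by --- is that the copy of $H^1(M;\Z)$ parametrizing oriented structures is the torsor coordinate of the oriented piece itself, not the restriction of the ambient coordinate. Oriented Engel structures correspond to their oriented development maps $Q\lra\xi_1$, i.e.\ to $\mbox{\rm Cov}_{n/2}(Q,\xi_1)$, which is an $H^1(M;\Z)$-torsor by Theorem~\ref{thm:coveringclassification}; the map $\iota_\xi=\pi_\xi\circ(\,\cdot\,)$ into $\eng^n_\xi(M\times\sone)$ doubles the action, $\iota_\xi(\alpha\cdot\phi_1)=(2\alpha)\cdot\iota_\xi(\phi_1)$. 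Equivalently, one identifies $\eng^n_{\xi;o}$ with $H^1(M;\Z)$ by halving its $2\cdot H^1(M;\Z)$-coordinate, which is legitimate because $H^1(M;\Z)$ is torsion-free; under the ambient correspondence with oriented basepoints the oriented classes map onto $2\Z\times\Xi_0(M)\times 2\cdot H^1(M;\Z)$, and the corollary's bijection is this composed with the rescaling. For comparison, the paper's own proof obtains the constraints $(n,\xi)\in 2\Z\times\Xi_0(M)$ much as you do (necessity via the trivialization of $\xi$ induced by an orientation of $\mD$ together with a section of the trivial bundle, existence via Theorem~\ref{thm:main}(ii)); your derivation of the index set is therefore fine, and it is only the assertion that the oriented classes fill the full last factor under compatible basepoints that must be repaired.
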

The approach to the homotopical classification of fiberwise 
coverings chosen in this paper is very 
intuitive (see~\S\ref{sec:hordist} and \S\ref{sec:class}). As 
a small detour, we included \S\ref{sec:relations} in which we 
discuss the relation of the present results with the results from 
\cite{KlSa}. In \cite{KlSa} we provided a classification of fiberwise 
coverings up to isomorphism of coverings in a very abstract way using
spectral sequences and \v{C}ech cohomology. With our geometrically 
intuitive construction in the present article we are able to recover 
these {\it abstractly derived} results (see~\S\ref{sec:relations}) 
using a more hands-on approach.

\section{An Introductory Example}\label{sec:introexample}
In this section we briefly discuss an introductory example without
going into too many details. The purpose is to indicate the 
various ways our homotopical invariant of fiberwise coverings can
be interpreted and formulated.

\subsection{The Covering-Space Theoretic Viewpoint}\label{sec:coveringview} 
The $2$-dimensional torus $T^2$ is a circle bundle over $\sone$. 
Obviously, the map $\phi_0(\pfat,\theta)=(\pfat,n\theta)$ is a 
fiberwise $n$-fold covering. Furthermore, for an integer $\alpha\in\Z$
we can define a different fiberwise $n$-fold covering by
$\phi_\alpha(\pfat,\theta)=(\pfat,n\cdot\theta+\alpha\cdot\pfat)$. Up to
homotopy, these are all existing fiberwise $n$-fold coverings 
of the circle bundle $T^2$ by the homotopy invariance 
of $(\phi_\alpha)_*$ on the first homology $H_1(T^2;\Z)$.

Going up two dimensions, the $4$-torus $T^4$ is a circle bundle over
$T^3$. Suppose we are given fiberwise covering 
maps $\phi_i\co T^4\lra T^4$, $i=1,2$,
and a loop $\gamma\co\sone\lra T^3$. On the pullback $\gamma^*T^4$ the
map $\phi_i$ induces a fiberwise covering 
\[
  \widetilde{\phi_i}
  \co
  \gamma^*T^4\lra\gamma^*T^4,
\]
whose homotopy class is uniquely determined by the homotopy class 
of $\phi_i$. The circle bundle $\gamma^*T^4$ is trivial.
After choosing a trivialization, $\widetilde{\phi_i}$ corresponds 
to some $\phi_{\alpha^i_\gamma}$ for a suitable $\alpha^i_\gamma\in\Z$. 
It is plausible to expect that $\alpha^i_\gamma$ is invariant under 
homotopies of $\phi_i$ and that it just depends on the homology class 
of the loop $\gamma$ in $T^3$. Let us accept this for 
now (see~\S\ref{sec:twist}, \S\ref{sec:group} and the proof 
of Theorem~\ref{thm:coveringclassification}). 
Although the assignment 
$\gamma\lmt\alpha^i_\gamma$ depends on the trivialization 
of $\gamma^*T^4$, the difference $\alpha^1_\gamma-\alpha^2_\gamma$ 
is certainly independent of it. If we choose
a trivialization $\gamma^*T^4\cong T^2$, then $\widetilde{\phi_1}$ 
and $\widetilde{\phi_2}$
correspond to maps which can be written as matrices
\[
 \widetilde{\phi_1}=
 \left(
 \begin{matrix}
 1 & 0 \\
 \alpha^1_\gamma & n
 \end{matrix}
 \right)
 \;\;\mbox{\rm and }\;\;
 \widetilde{\phi_2}=
 \left(
 \begin{matrix}
 1 & 0 \\
 \alpha^2_\gamma & n
 \end{matrix}
 \right).
\]
From this we can read off the equality
\begin{equation}
\begin{array}{rcl}
 (\alpha^1_\gamma-\alpha^2_\gamma)[F]
 &=&
 (p_F)_*((\phi_1)_*-(\phi_2)_*)[\gamma]\\
&=&
 [p_F\circ\phi_1\circ\gamma]
 -
[p_F\circ\phi_2\circ\gamma],
\end{array}
 \label{eq:equation03}
\end{equation}
where $[F]$ is the fundamental class of a fiber and
$p_F\co T^4\lra\sone$ the projection onto the fiber.

\subsection{The Interpretation as Twists}\label{sec:twist} An alternative 
viewpoint can be derived 
easily from the given discussion. As mentioned above, the 
$\alpha^i_\gamma$'s just depend on the homotopy type of
$\phi_i$ and the homology class of the $\gamma$'s. Hence, the assignment
$\gamma\lmt\alpha^1_\gamma-\alpha^2_\gamma$ can be reduced to a map 
\begin{equation}
  \d(\phi_1,\phi_2)\co 
  H_1(T^3;\Z)\lra\Z,\,
  [\gamma]\lmt\alpha^1_\gamma-\alpha^2_\gamma,
 \label{eq:equation04}
\end{equation}
which carries the relevant homotopical information. So,
\begin{equation}
  \phi_\alpha(\pfat,\theta)
  =
  (\pfat,n\theta+\left<\alpha,\pfat\right>),
  \label{eq:equation05}
\end{equation}
where $\alpha=(\alpha_1,\alpha_2,\alpha_3)\in\Z^3$, all 
represent different homotopy classes of
fiberwise $n$-fold covering maps. Comparing with the discussion 
from \S\ref{sec:coveringview}, we see that the $\phi_\alpha$ have
to be distinct, pairwise. Namely, for a standard basis $[\gamma_i]$, 
$i=1,2,3$, of $H_1(T^3;\Z)$ with $\phi_1=\phi_\alpha$ and 
$\phi_2=\phi_0$ we compute
\[
 \d(\phi_\alpha,\phi_0)[F]
 =(\alpha^1_{\gamma_i}-\alpha^2_{\gamma_i})[F]
 =\alpha_i[F],
\]
where the first equality is given by definition of 
$\d(\phi_\alpha,\phi_0)$ and the 
second equality is given by \eqref{eq:equation03}.
So, $\d(\phi_\alpha,\phi_0)$ is the morphism on $H_1(T^3;\Z)$ which sends 
$[\gamma_i]$ to $\alpha_i$ for $i=1,2,3$. Moreover, 
the right hand side of the Equation~\eqref{eq:equation03} we used here 
determines the number of times $\phi_1$ moves around the fiber relative 
to $\phi_2$ as we move along $\gamma$. We see by 
\eqref{eq:equation04} and Equation~\eqref{eq:equation03}
that this is precisely what is measured by $\d(\phi_\alpha,\phi_0)$.

\subsection{The Group Action}\label{sec:group} Another way 
of formulating the definition of $\phi_\alpha$ is
\[
 \phi_\alpha(\pfat,\theta)
 =(\pfat,n\theta+\left<\alpha,\pfat\right>)
 =\left<\alpha,\pfat\right>\cdot(\pfat,n\theta)
 =\left<\alpha,\pfat\right>\cdot\phi_0(\pfat,\theta),
\]
where the second equality uses the $\sone$-action on the fibers 
of $T^4$. If we define a map $\f\co T^3\lra \sone$ by 
$\f(\pfat)=\left<\alpha,\pfat\right>$, then 
$\phi_\alpha(\pfat,\theta)=\f(\pfat)\cdot\phi_0(\pfat,\theta)$. In fact, every map 
$\phi_\alpha$ can be obtained from $\phi_0$ by using a 
suitable map $T^3\lra\sone$ and the group action as above. It 
is not hard to see that a homotopy of $\phi_\alpha$ through 
fiberwise covering maps is equivalent to a homotopy of the map 
$\f$. The homotopy type of $\f$ is captured by the class 
$\f^*[\sone]\in H^1(T^3;\Z)$ where $[\sone]$ is a fundamental 
class of $\sone$. By the universal coefficient theorem, 
$H^1(T^3;\Z)$ is isomorphic to $\Hom(H_1(T^3;\Z);\Z)$. Therefore, 
the class $\f^*[\sone]$ is uniquely determined by evaluation 
on a basis of $H_1(T^3;\Z)$. Using the standard basis 
$[\gamma_i]$, $i=1,2,3$, of the first homology of $T^3$, we obtain
\[
 (\f^*[\sone])[\gamma_i]=\alpha_i
\]
for $i=1,2,3$. More generally, given two fiberwise coverings 
$\phi_i$, $i=1,2$, and let $\f\co T^3\lra\sone$ be given such 
that $\phi_2=\f\cdot\phi_1$, then for every loop $\gamma$ we 
have
\[
 \alpha^1_\gamma-\alpha^2_\gamma=(\f^*[\sone])[\gamma].
\]
So, in particular, $\d(\phi_1,\phi_2)[\gamma]=(\f^*[\sone])[\gamma]$.

\subsection{Relation to Engel Structures on the Four-Dimensional Torus}
\label{sec:exaengel} It 
is easily possible to define Engel structures on the four-dimensional 
torus. Let $\xi$ be the contact structure on the base space 
which is given as the kernel of the $1$-form $\sin(2\pi z)dx + \cos(2\pi z)dy$. 
The contact planes are spanned by the vector field $\partial_z$ and
\[
  V_\pfat
 =
 \cos(2\pi z)\partial_x+\sin(2\pi z)\partial_y
\]
where $\pfat$ is a point in $T^3$. On $T^4$, with coordinates $(\pfat,\theta)$, we 
define an Engel structure $\mD^n_\alpha(\xi)$ as the $2$-planes spanned by 
$\partial_\theta$ and
\[
 \cos\big(\pi\big(n\theta + \left<\alpha,\pfat\right>\big)
 \big)
 \thinspace
 \partial_z
 +
 \sin\big(\pi\big(n\theta + \left<\alpha,\pfat\right>\big)
 \big)
 \thinspace
 V_\pfat. 
\] 
There is a natural map $\phi\co T^4\lra T^4$ called the development 
map. The development map of this Engel structure is the fiberwise 
covering map $\phi_\alpha$. We should see that the data of the Engel 
structure translate into the data that determine the homotopy type of
$\phi_\alpha$: Pick the standard basis $[\gamma_i]$, $i=1,2,3$, of
$H_1(M;\Z)$ 
represented by the obvious loops $\gamma_i$, then 
$\left.\mD^n_\alpha(\xi)\right|_{\gamma_i}$ determines a family 
of $1$-dimensional subspaces in $\left.\xi\right|_{\gamma_i}$. What we
see is that by the summand $\left<\alpha,\,\cdot\,\right>$ the subspaces
of $\left.\xi\right|_{\gamma_i}$ 
associated to $\left.\mD^n_\alpha(\xi)\right|_{\gamma_i}$ make
$\alpha_i$-full turns inside $\left.\xi\right|_{\gamma_i}$ relative to
the family of subspaces given by $\left.\mD^n_0(\xi)\right|_{\gamma_i}$.

\section{Homotopical Classification of Fiberwise Coverings}
For this section suppose we are given a closed oriented manifold 
$M$ and two circle bundles $Q$ and $P$ over $M$. Recall that a 
map $\phi\co Q\lra P$ is called a 
fiberwise $n$-fold covering map if its restriction to every fiber 
$Q_p$, $p\in M$, equals the standard $n$-fold covering map 
$\varphi_n\co\sone\lra\sone,\,\theta\lmt n\cdot\theta$. To be more 
precise, recall that there are simply-transitive $\sone$-actions on 
the fibers of $Q$ and $P$. Fixing an element in the fiber $Q_p$ and $P_p$, 
the group 
actions provide us with an identification of $Q_p$ and $P_p$ with 
$\sone$. Under these identifications, the map 
$\left.\phi\right|_{Q_p}$ corresponds to $\varphi_n$. 
\begin{rem} A priori, one 
could define fiberwise $n$-fold coverings by requiring the restriction 
$\left.\phi\right|_{Q_p}$ to be an $n$-fold covering but not specifically 
$\varphi_n$. However, these two notions are equivalent up to homotopy. 
In fact, a fiberwise covering of the {\it more general form} can be homotoped 
into a fiberwise covering of the {\it restricted type} we are considering in
this article.
\end{rem}
\subsection{The Horizontal Distance}\label{sec:hordist} Let $G$ be 
an abelian group,
$n$ a natural number and denote by $K(G,n)$ the associated 
Eilenberg-MacLane space. Recall that for every CW-complex $X$ 
there is a natural bijection
\[
 [X;K(G,n)]\lra H^n(X;\Z), \, [f]\lmt f^*(\alpha_0),
\]
where $\alpha_0\in H^n(K(G,n);G)$ is a fixed fundamental class 
of $K(G,n)$ (see~\cite[Theorem~4.57]{Hatchy}). We will apply 
this classification to fiberwise 
coverings: Namely, given two $n$-fold fiberwise covering maps 
$\phi_i\co Q\lra P$, $i=1,2$, for every $p\in M$ we define 
$\done(p)\in \sone$ as the element defined via the equation
\[
  \done(p)\cdot \phi_1(q)=\phi_2(q),
\]
where $q$ is an arbitrary point in $Q_p$. This 
definition does not depend on the point $q$. If $q'$ is another 
point in the fiber over $p$, then there exists an element 
$\theta\in\sone$ such that $\theta\cdot q=q'$. By the chain of 
equalities
\[
  \begin{array}{rcl}
  \phi_1(q')
  &=&
 \phi_1(\theta\cdot q)
 =(n\cdot\theta)\cdot f(q)
 =(n\cdot\theta+ \done(p))\cdot\phi_2(q')\\ 
 &=&
 (\done(p)+n\cdot\theta)\cdot\phi_2(q)
 =\done(p)\cdot \phi_2(\theta\cdot q)\\
 &=&
 \done(p)\cdot\phi_2(q')
\end{array}
\]
well-definedness follows. Here, we applied the assumption that 
$\phi_1$ and $\phi_2$ are both fiberwise covering maps. Thus, 
we are provided with a map
\[
  \done\co M\lra\sone.
\]
We will prove that the homotopy type of this map 
{\it measures the homotopical distance} between $\phi_1$ and 
$\phi_2$. To this end, we need a homotopical classification
of maps into $\sone$. Note that $\sone$ equals $K(\Z,1)$ 
and, therefore, we have a bijection
\[
  [M;\sone]\lra H^1(M;\Z),\, [f]\lmt f^*([\sone]), 
\]
where $[\sone]$ is the fundamental class of $[\sone]$ that 
represents the natural orientation on the $\sone$-fibers.
\begin{definition}\label{def:hordist} Given two fiberwise $n$-fold 
coverings $\phi_i\co Q\lra P$, $i=1,2$, we denote by  $\d(\phi_1,\phi_2)$ 
the class $\done^*([\sone])\in H^1(M;\Z)$ and call it
{\bf the horizontal distance} between $\phi_1$ and $\phi_2$.
\end{definition}
The example of the $4$-torus discussed in 
\S\ref{sec:introexample} shows that different fiberwise 
coverings can be constructed by adding twists along loops in 
the base space (see~\S\ref{sec:twist}). More precisely, 
we have seen in the specific case of the $4$-torus that the
horizontal distance between two fiberwise coverings
$\phi_1$ and $\phi_2$ measures some sort of twisting 
of $\phi_1$ relative to $\phi_2$ 
along loops in the base space (see~\S\ref{sec:coveringview} 
and \S\ref{sec:twist}). What is done for the $4$-torus can
be done in the general setting as well. Moreover, observe
that
$\d(\phi_1,\phi_2)\frown[\gamma]=\d(\phi_1,\phi_2)([\gamma])$ 
where on the right we interpret the class $\d(\phi_1,\phi_2)$ 
as an element of $\mbox{\rm Hom}(H_1(M;\Z),\Z)\cong H^1(M;\Z)$ 
by the universal coefficient theorem. However, we have 
\[
  \d(\phi_1,\phi_2)[\gamma]=\done^*([\sone])[\gamma]
 =[\done\circ\gamma]
\]
and the homotopy class of $\done\circ\gamma\co\sone\lra\sone$ 
precisely measures the relative twisting discussed in \S\ref{sec:twist}.

\subsection{Homotopy Classification}\label{sec:class} Let 
$Q$ and $P$ be two circle bundles
over a closed oriented manifold. Denote by $\mbox{\rm Cov}(Q,P)$
the homotopy classes of fiberwise covering maps where homotopies
move through fiberwise coverings and for every $n\in\N$ denote by
$\mbox{\rm Cov}_n(Q,P)$ the subset consisting of homotopy classes of
$n$-fold coverings. Our goal is to prove the 
following theorem.
\begin{theorem}\label{thm:coveringclassification} For a 
number $n\in\N$, the set $\covn(Q,P)$ is
non-empty if and only if $n\cdot e(Q)=e(P)$. 
If it is non-empty,  there 
is a simply-transitive group action of $H^1(M;\Z)$
on $\covn(Q,P)$. Hence, there is a 
one-to-one correspondence between $\covn(Q,P)$ 
and $H^1(M;\Z)$.\end{theorem}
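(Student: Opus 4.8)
The plan is to prove the theorem in two parts: first settle the existence (non-emptiness) question via the Euler class obstruction, then establish the simply-transitive $H^1(M;\Z)$-action on the non-empty set $\covn(Q,P)$.

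For the existence part, I would argue that a fiberwise $n$-fold covering $\phi\co Q\lra P$ forces a relation between the Euler classes. The key observation is that such a $\phi$ is a bundle map covering the identity on $M$ which restricts to $\varphi_n$ on each fiber. One natural approach is to use the $\sone$-action: a fiberwise covering $\phi$ is equivariant with respect to the $\sone$-action on $Q$ and the $\sone$-action on $P$ via the $n$-th power map $\theta\mapsto n\theta$. From this equivariance one deduces that the classifying-map data of $Q$ pushed through multiplication by $n$ must agree with that of $P$, yielding $n\cdot e(Q)=e(P)$. Concretely, I would compare transition functions: if $Q$ has transition cocycle $g_{ij}\co U_i\cap U_j\lra\sone$, then the existence of $\phi$ forces $P$ to have transition cocycle $n\cdot g_{ij}$ (written additively), and on cohomology this is exactly $n\cdot e(Q)=e(P)$. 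For the converse, assuming $n\cdot e(Q)=e(P)$, I would construct a fiberwise covering explicitly: the equation on Euler classes guarantees that the bundle $P$ is isomorphic to the bundle obtained from $Q$ by the fiberwise $n$-th power construction, and the tautological map $Q\lra Q^{\otimes n}\cong P$ (raising to the $n$-th power fiberwise) is the desired covering.

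For the group action, given a class $[\phi]\in\covn(Q,P)$ and a class $c\in H^1(M;\Z)$, I would define $c\cdot[\phi]$ using the construction from \S\ref{sec:group}: represent $c$ by a map $\f\co M\lra\sone$ with $\f^*[\sone]=c$, and set $(c\cdot\phi)(q)=\f(p)\cdot\phi(q)$ for $q\in Q_p$, using the $\sone$-action on $P$. This is again a fiberwise $n$-fold covering since multiplying by a constant in each fiber does not change the covering degree. Well-definedness (independence of the representative $\f$ and of $\phi$ within its homotopy class) follows because homotopic choices of $\f$ give homotopic results and because $\f^*[\sone]$ is a complete homotopy invariant of $\f$ by $\sone=K(\Z,1)$. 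That the action is transitive and free is precisely where the horizontal distance enters: given two coverings $\phi_1,\phi_2$, the associated map $\done\co M\lra\sone$ satisfies $\phi_2(q)=\done(p)\cdot\phi_1(q)$, so $\phi_2=\done\cdot\phi_1$ in the sense of the action, and $\d(\phi_1,\phi_2)=\done^*([\sone])$ is the unique group element carrying $[\phi_1]$ to $[\phi_2]$. Freeness amounts to showing $c\cdot[\phi]=[\phi]$ implies $c=0$, which follows because $c\cdot\phi$ homotopic to $\phi$ through fiberwise coverings forces $\done$ nullhomotopic, hence $c=\done^*([\sone])=0$.

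The main obstacle I anticipate is the freeness of the action, specifically verifying that a homotopy of $c\cdot\phi$ to $\phi$ through fiberwise coverings really does produce a nullhomotopy of the auxiliary map $\f$ (equivalently of $\done$). The subtlety is that the horizontal distance $\done$ was defined for a fixed pair of coverings, and I must check it behaves well under the constraint that intermediate maps remain genuine fiberwise coverings rather than arbitrary homotopies. I would handle this by showing that the assignment sending a pair to its horizontal distance is itself a homotopy invariant: if $\phi_1\simeq\phi_1'$ and $\phi_2\simeq\phi_2'$ through fiberwise coverings, then $\d(\phi_1,\phi_2)=\d(\phi_1',\phi_2')$, because a homotopy through fiberwise coverings induces a homotopy of the corresponding $\done$ maps $M\lra\sone$. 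Once this invariance is established, the remaining claims about transitivity and freeness reduce to the formal properties of $\done^*([\sone])$ already developed in \S\ref{sec:hordist} and \S\ref{sec:group}, and the final bijection $\covn(Q,P)\cong H^1(M;\Z)$ is the standard consequence of a simply-transitive action.
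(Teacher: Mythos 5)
Your proposal is correct, and its core mechanism --- the action $(\f\cdot\phi)(q)=\f(\pi(q))\cdot\phi(q)$ of $H^1(M;\Z)$ together with the horizontal distance and the $K(\Z,1)$ classification of maps $M\lra\sone$ --- is the same one the paper uses; but you execute both halves somewhat differently, and both of your variants work. For existence, the paper argues geometrically: it picks a submanifold $\Sigma\subset M$ Poincar\'e dual to $e(Q)$, writes $Q=(M\backslash\nu\Sigma)\times\sone\cup_\psi(\nu\Sigma\times\sone)$, replaces the gluing map $(\psi_1,\psi_2)$ by $(\psi_1,\psi_2^n)$ to obtain a model $P_\eta$ of $P$ on which $[(p,\theta)]\lmt[(p,n\cdot\theta)]$ is visibly a fiberwise $n$-fold covering, and for the forward implication defers to a similar argument or to \cite[Lemma~3.1]{KlSa}. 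Your transition-cocycle computation (which is legitimate because a fiberwise covering is equivariant, $\phi(\theta\cdot q)=(n\theta)\cdot\phi(q)$, so pushing local sections of $Q$ through $\phi$ produces a cocycle $n\cdot g_{ij}$ for $P$) plus the tensor-power bundle $Q^{\otimes n}$ handles both implications in one uniform framework, and is arguably cleaner. For the classification, the paper funnels injectivity and surjectivity of $\kappa\co\alpha\lmt\alpha\cdot[\phi_1]$ through two lemmas --- additivity of the horizontal distance (Lemma~\ref{lem:additivity}) and the characterization of homotopic coverings (Lemma~\ref{lem:homotopy}) --- whereas you bypass additivity entirely via the canonical representative: for $c=\d(\phi_1,\phi_2)$ the defining map $\done$ satisfies $\done\cdot\phi_1=\phi_2$ on the nose, so transitivity is immediate and only freeness requires an argument. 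The ``main obstacle'' you flag --- that a homotopy through fiberwise coverings forces a homotopy of the associated maps $M\lra\sone$ --- is genuine but is dispatched exactly as in the (brief) first half of the paper's Lemma~\ref{lem:homotopy}: given a homotopy $(H_t)$ through fiberwise coverings from $\phi$ to $\f\cdot\phi$, the family $\f_{(\phi,H_t)}$ is continuous in $t$ because it is characterized pointwise by the free $\sone$-action on the fibers of $P$, and it interpolates from the constant map $\f_{(\phi,\phi)}$ to $\f_{(\phi,\f\cdot\phi)}=\f$, so $c=\f^*[\sone]=0$. In short: same skeleton, a more bundle-theoretic existence proof, and a streamlined transitivity argument that renders the additivity lemma unnecessary.
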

In order to prove this result, we will need the following two 
preparatory lemmas.

\begin{lem}\label{lem:homotopy} Suppose we are given two fiberwise 
covering maps $\phi_i\co Q\lra P$, $i=1,2$. These maps are homotopic 
through fiberwise covering maps if and only if they have the same 
number of sheets and their horizontal distance vanishes.
\end{lem}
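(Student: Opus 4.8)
The plan is to prove both directions of the equivalence by leveraging the map $\done\co M\lra\sone$ and the classification $[M;\sone]\cong H^1(M;\Z)$. The number-of-sheets condition is clearly necessary in both directions, since the number of sheets is the degree of $\left.\phi\right|_{Q_p}$, and this is a locally constant (hence globally constant on connected $M$) invariant that is preserved under homotopies through fiberwise coverings. So I may assume throughout that $\phi_1$ and $\phi_2$ are both fiberwise $n$-fold coverings for a common $n$, and focus on the horizontal distance.

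For the forward direction, suppose $\phi_1$ and $\phi_2$ are homotopic through fiberwise $n$-fold coverings via a homotopy $\phi_t$. For each $t$, the construction of $\done$ gives a map $\f_{(\phi_1,\phi_t)}\co M\lra\sone$, and these assemble into a homotopy $M\times[0,1]\lra\sone$ from the constant map $\f_{(\phi_1,\phi_1)}$ (which is the constant map at the identity of $\sone$, so it represents $0\in H^1(M;\Z)$) to $\done=\f_{(\phi_1,\phi_2)}$. The only thing to check is continuity in $t$, which follows because $\f_{(\phi_1,\phi_t)}(p)$ is determined by the equation $\f_{(\phi_1,\phi_t)}(p)\cdot\phi_1(q)=\phi_t(q)$ and depends continuously on $\phi_t$. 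Since the horizontal distance is by Definition~\ref{def:hordist} the homotopy-invariant class $\done^*([\sone])$, we conclude $\d(\phi_1,\phi_2)=0$.

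For the reverse direction, suppose $\d(\phi_1,\phi_2)=\done^*([\sone])=0$ in $H^1(M;\Z)$. Under the bijection $[M;\sone]\lra H^1(M;\Z)$, this means $\done$ is null-homotopic, so there is a homotopy $g_t\co M\lra\sone$ with $g_0$ the constant map at the identity and $g_1=\done$. The idea is then to use the fiberwise $\sone$-action (exactly as in the group-action viewpoint of \S\ref{sec:group}) to convert this null-homotopy into a homotopy of coverings: define $\phi_t(q)=g_t(p)\cdot\phi_1(q)$ for $q\in Q_p$. Then $\phi_0=\phi_1$, each $\phi_t$ is again a fiberwise $n$-fold covering because left-translation by the fixed element $g_t(p)\in\sone$ commutes with the covering structure on the fiber, and $\phi_1(q)=\done(p)\cdot\phi_1(q)=\phi_2(q)$ by the defining equation of $\done$. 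This exhibits $\phi_1$ and $\phi_2$ as homotopic through fiberwise coverings.

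The main obstacle, and the step deserving the most care, is the reverse direction's claim that the resulting $\phi_t$ genuinely moves through fiberwise coverings rather than merely through fiberwise maps: one must verify that for each $t$ and each $p$, the fiber map $\left.\phi_t\right|_{Q_p}$ is still the standard $n$-fold covering $\varphi_n$ under the chosen $\sone$-identifications. Since multiplication by $g_t(p)$ is a translation of the fiber $P_p\cong\sone$, it does not change the degree but shifts the basepoint, so $\left.\phi_t\right|_{Q_p}$ is a translate of $\varphi_n$; after re-identifying the fibers appropriately this is again of the required standard form, invoking the remark that fiberwise coverings of the general form can be homotoped into the restricted type. I would make this identification explicit to ensure the homotopy stays within the class of coverings under consideration.
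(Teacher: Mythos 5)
Your proof is correct and takes essentially the same route as the paper: the forward direction observes that a homotopy through fiberwise coverings yields a null-homotopy of $\f_{(\phi_1,\phi_2)}$, and the reverse direction uses the fiberwise $\sone$-action to convert a null-homotopy of $\f_{(\phi_1,\phi_2)}$ into the homotopy $(q,t)\lmt h(\pi(q),t)\cdot\phi_1(q)$, which is exactly the paper's construction. Your closing worry is actually a non-issue, and no appeal to the remark about the more general form is needed: the identification of the target fiber $P_p$ with $\sone$ is made by fixing an arbitrary element, so taking $\phi_t(q_0)$ as the basepoint and using commutativity of the $\sone$-action gives $\phi_t(\theta\cdot q)=(n\cdot\theta)\cdot\phi_t(q)$ on the nose, i.e.\ each $\phi_t$ is of the restricted type by definition.
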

\begin{proof} If the maps are homotopic as fiberwise covering 
maps, then the map $\f_{(\phi_1,\phi_2)}$ is homotopic to a 
constant map. Hence, their horizontal distance vanishes. 

Conversely, 
suppose that $\phi_i$, $i=1,2$, have the same number of sheets and
have vanishing horizontal distance. We define a homotopy
\[
  F\co Q\lra [0,1]\lra Q,
  \,(q,t)\lmt h(\pi(q),t)\cdot\phi_1(q),
\]
where $h\co M\times [0,1]\lra \sone$ is a homotopy from 
$\f_{(\phi_1,\phi_2)}$ to the constant map $c_1$ which 
maps every point to $1$.
\end{proof}
\begin{lem}\label{lem:additivity} Given fiberwise covering 
maps $\phi_i\co Q\lra P$, $i=1,2,3$, of a circle bundle 
$P\lra M$ over a closed oriented manifold $M$ we have 
$\d(\phi_1,\phi_2)+\d(\phi_2,\phi_3)=\d(\phi_1,\phi_3)$.
\end{lem}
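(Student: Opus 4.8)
The plan is to first establish a pointwise relation between the three comparison maps $\f_{(\phi_1,\phi_2)}$, $\f_{(\phi_2,\phi_3)}$, $\f_{(\phi_1,\phi_3)}\co M\lra\sone$, and then translate that relation into the asserted additivity in $H^1(M;\Z)$. Fix $p\in M$ and an arbitrary $q\in Q_p$. By definition $\f_{(\phi_1,\phi_2)}(p)\cdot\phi_1(q)=\phi_2(q)$ and $\f_{(\phi_2,\phi_3)}(p)\cdot\phi_2(q)=\phi_3(q)$. Substituting the first equation into the second and writing the group $\sone$ additively, as in the rest of the section, I obtain
\[
 \phi_3(q)=\big(\f_{(\phi_1,\phi_2)}(p)+\f_{(\phi_2,\phi_3)}(p)\big)\cdot\phi_1(q).
\]
Comparing this with the defining equation $\f_{(\phi_1,\phi_3)}(p)\cdot\phi_1(q)=\phi_3(q)$ and invoking the uniqueness of the element implementing the $\sone$-action (the same well-definedness argument given right after the definition of $\done$), I conclude the pointwise identity
\[
 \f_{(\phi_1,\phi_3)}=\f_{(\phi_1,\phi_2)}+\f_{(\phi_2,\phi_3)},
\]
that is, $\f_{(\phi_1,\phi_3)}$ is the pointwise product in $\sone$ of the other two comparison maps.

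It then remains to see that applying $\done^*([\sone])$ sends this pointwise product to a sum in cohomology. The route I would take uses that $\sone$ is a topological group: writing $\mu\co\sone\times\sone\lra\sone$ for the multiplication and $p_1,p_2$ for the two projections, a short computation in $H^1(\sone\times\sone;\Z)$ (restrict $\mu$ to each factor, where it is the identity) gives $\mu^*[\sone]=p_1^*[\sone]+p_2^*[\sone]$. Since $\f_{(\phi_1,\phi_3)}=\mu\circ(\f_{(\phi_1,\phi_2)},\f_{(\phi_2,\phi_3)})$, naturality of the pullback yields
\[
 \f_{(\phi_1,\phi_3)}^*([\sone])=\f_{(\phi_1,\phi_2)}^*([\sone])+\f_{(\phi_2,\phi_3)}^*([\sone]),
\]
which is precisely $\d(\phi_1,\phi_3)=\d(\phi_1,\phi_2)+\d(\phi_2,\phi_3)$. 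Alternatively, one may evaluate on an arbitrary loop $\gamma\co\sone\lra M$: since $\d(\phi_i,\phi_j)[\gamma]=[\f_{(\phi_i,\phi_j)}\circ\gamma]$ is the winding number of a self-map of $\sone$, and winding number is additive under pointwise multiplication (it is the isomorphism $\pi_1(\sone)\cong\Z$), the identity holds after pairing with every $[\gamma]$, hence in $H^1(M;\Z)$ via the universal coefficient theorem $H^1(M;\Z)\cong\Hom(H_1(M;\Z),\Z)$.

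The only genuinely non-formal ingredient is the comultiplication formula $\mu^*[\sone]=p_1^*[\sone]+p_2^*[\sone]$ (equivalently, the additivity of winding numbers under pointwise multiplication of loops); everything preceding it is a direct manipulation of the defining equations for $\done$. I therefore expect the main — and fairly mild — obstacle to be cleanly organizing this passage from the pointwise group law on maps $M\lra\sone$ to addition in $H^1(M;\Z)$, rather than any of the bookkeeping with the definitions.
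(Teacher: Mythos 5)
Your proposal is correct and follows essentially the same route as the paper: establish the pointwise identity $\f_{(\phi_1,\phi_3)}=\f_{(\phi_1,\phi_2)}+\f_{(\phi_2,\phi_3)}$ from the defining equations and simple transitivity of the $\sone$-action, then pass to the sum of pullback classes in $H^1(M;\Z)$. The only difference is that you explicitly justify the final step (via the H-space structure of $\sone$, i.e.\ $\mu^*[\sone]=p_1^*[\sone]+p_2^*[\sone]$, or additivity of winding numbers), whereas the paper treats that passage as immediate.
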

\begin{proof} Given a point $q\in Q$ we have 
$\f_{(\phi_1,\phi_3)}(q)\cdot \phi_1(q)=\phi_3(q)$.
Furthermore, $\f_{(\phi_1,\phi_3)}(q)$ is uniquely 
determined by this equality. However, 
\begin{eqnarray*}
 (\f_{(\phi_1,\phi_3)}(q)
 +\f_{(\phi_1,\phi_2)}(q))\cdot\phi_1(q)
 &=&
 \f_{(\phi_2,\phi_3)}(q)
 \cdot(\f_{(\phi_1,\phi_2)}(q)\cdot\phi_1(q))
 =
 \f_{(\phi_2,\phi_3)}(q)\cdot
 \phi_2(q)\\
 &=&\phi_3(q)\\
 &=&\f_{(\phi_1,\phi_3)}(q)\cdot\phi_1(q).
\end{eqnarray*}
Since the action on the fibers is simply-transitive, 
the sum
$\f_{(\phi_2,\phi_3)}(q)+\f_{(\phi_1,\phi_2)}(q)$
equals
 $\f_{(\phi_1,\phi_3)}(q)$. 
Hence, 
\[
  \d(\phi_1,\phi_3)=\f_{(\phi_1,\phi_3)}^*[\sone]
 =
 \f_{(\phi_2,\phi_3)}^*[\sone]
 +\f_{(\phi_1,\phi_2)}^*[\sone]
 =
 \d(\phi_1,\phi_2)+\d(\phi_2,\phi_3),
\]
which finishes the proof.
\end{proof}

\begin{proof}[Proof of 
Theorem~\ref{thm:coveringclassification}] Given a fiberwise 
covering $\phi_1\co Q\lra P$ and a class 
$\alpha\in H^1(M;\Z)$, denote by $\f\co M\lra\sone$ a map 
with $\f^*[\sone]=\alpha$. Then, define
\begin{equation}
  \f\cdot\phi_1\co Q\lra P,\; q\lmt \f(\pi(q))\cdot \phi_1(q),
  \label{eq:action}
\end{equation}
where $\pi\co Q\lra M$ is the canonical projection map. We define 
$\alpha\cdot [\phi_1]:=[\f\cdot\phi_1]$. The homotopy class 
of $\f\cdot\phi_1$ does not depend on the specific choice 
of $\f$ which can be derived directly using 
Lemma~\ref{lem:additivity}. So, we are provided with a map
\[
  \kappa\co H^1(M;\Z)\lra \mbox{\rm Cov}_n(Q,P),
  \;\alpha\lmt \alpha\cdot[\phi_1].
\]
By construction, $\d(\phi_1,\kappa(\alpha))=\alpha$. So, the 
last two lemmas show that this is a bijection: Namely, 
given $\alpha$, $\alpha'$ such that 
$\kappa(\alpha)=\kappa(\alpha')$ then 
\[
  \alpha-\alpha'
  =\d(\phi_1,\kappa(\alpha))
  +\d(\kappa(\alpha'),\phi_1)
  =\d(\phi_1,\phi_1)=0
\]
by Lemma~\ref{lem:additivity}. Hence, $\alpha=\alpha'$ which 
shows injectivity.

To prove surjectivity, pick a fiberwise $n$-fold covering 
$\phi_2\co Q\lra P$ and denote by $\alpha$ the class 
$\d(\phi_1,\phi_2)$. Now we have
\[
  \d(\phi_2,\kappa(\alpha))
  =\d(\phi_2,\phi_1)
  +\d(\phi_1,\kappa(\alpha))=0
\]
by Lemma~\ref{lem:additivity}. Finally, Lemma~\ref{lem:homotopy} 
shows that the fiberwise coverings $\phi_2$ and 
$\kappa(\alpha)$ are homotopic through fiberwise covering 
maps.\vspace{0.3cm}

To prove the existence statement in the theorem, assume 
that $n\cdot e(Q)=e(P)$. Denote by $\Sigma\subset M$ a 
submanifold which represents the homology class 
$\PD[e(Q)]\in H_{n-2}(M;\Z)$ and denote by $\nu\Sigma$ 
its normal bundle. Since $U(1)$ is a retract of 
$\mbox{\rm Diff}(\sone)$, without loss of generality we 
may assume that $Q$ determines a two-dimensional
real vector bundle 
$E_Q\lra M$ for which $Q$ is the unit-sphere bundle after
choosing a suitable metric $g$. The 
homology class of $\Sigma$ is the homology class of the 
zero locus of a section $s\co M\lra E_Q$ which intersects 
the zero section of $E_Q$ transversely. So, we may assume 
that $s^{-1}(0)=\Sigma$. Consequently, by normalizing $s$, 
we obtain a section of 
$\left. Q\right|_{M\backslash\nu\Sigma}$. Therefore, we 
can write $Q$ as
\begin{equation}
 Q
   =
 (M\backslash\nu\Sigma)\times\sone
 \cup_\psi
(\nu\Sigma\times\sone)
\label{eq:equation01}
\end{equation}
where 
$
 \psi
 =
 (\psi_1,\psi_2)\co 
 \partial\nu\Sigma\times\sone 
 \lra
 \partial(M\backslash\nu\Sigma)\times\sone
$ 
denotes a suitable gluing map. We define a circle bundle
$P_\eta$ by
\begin{equation}
 P_\eta
   =
 (M\backslash\nu\Sigma)\times\sone
 \cup_{\eta}
(\nu\Sigma\times\sone)
\label{eq:equation2}
\end{equation}
where $\eta$ is the gluing map $(\psi_1,\psi_2^n)$. The Euler 
class of $P_\eta$ is $n\cdot e(Q)$ which equals the 
Euler class of $P$. So, $P_\eta$ is isomorphic to $P$ and 
we may use $P_\eta$ as a model for $P$. Now, there is an obvious 
fiberwise $n$-fold covering map from $Q$ to $P$, namely the 
map
\[
  \phi\co Q\lra P,\, [(p,\theta)]\lmt [(p,n\cdot\theta)].
\]
Here, we used the description \eqref{eq:equation01} of $Q$ and the
description \eqref{eq:equation2} of $P$.

Conversely, assuming that there is a fiberwise $n$-fold 
covering $\phi\co Q\lra P$, we have to prove that 
$n\cdot e(Q)=e(P)$. We either use a similar reasoning as 
above, or we apply \cite[Lemma~3.1]{KlSa}.
\end{proof}

\subsection{Relations to Previous Results}\label{sec:relations} The 
horizontal distance also detects if two fiberwise covering maps 
$\phi_i\co Q\lra P$, $i=1,2$, are isomorphic as fiberwise covering 
maps.
\begin{prop}\label{prop:characiso} Two fiberwise coverings 
$\phi_i\co Q\lra P$, $i=1,2$, are isomorphic as fiberwise coverings 
if and only if they have the same number $n$ of sheets and 
$\d(\phi_1,\phi_2)\;\mbox{\rm mod } n=0$.
\end{prop}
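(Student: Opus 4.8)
The plan is to realise an isomorphism of the coverings $\phi_1$ and $\phi_2$ explicitly as a fibrewise rotation of $Q$ and then to translate the existence of such a rotation into a divisibility condition on the horizontal distance. Recall that an isomorphism of the two fibrewise coverings is a homeomorphism $\Psi\co Q\lra Q$ over $P$, i.e.\ one satisfying $\phi_2\circ\Psi=\phi_1$. Composing with the projection $P\lra M$ shows that such a $\Psi$ automatically covers the identity on $M$, so in particular a homeomorphism over $P$ preserves the number of sheets and the two coverings must have the same sheet number $n$. Restricting to a fibre $Q_p\cong\sone$ and using that $(\phi_1)_p$ and $(\phi_2)_p=\done(p)\cdot(\phi_1)_p$ both agree with $\varphi_n$ up to rotations, the identity $(\phi_2)_p\circ\Psi_p=(\phi_1)_p$ forces $\Psi_p$ to be a rotation. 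Hence $\Psi$ must have the form $\Psi(q)=g(\pi(q))\cdot q$ for a map $g\co M\lra\sone$, and since $\phi_2$ is a fibrewise $n$-fold covering the intertwining condition $\phi_2\circ\Psi=\phi_1$ becomes the equation $\varphi_n\circ g=-\done$ of maps $M\lra\sone$ (written additively, $n\cdot g=-\done$).

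First I would record the easy direction. If such a $\Psi$, and hence such a $g$, exists, then applying $(-)^*[\sone]$ to $\varphi_n\circ g=-\done$ together with $\varphi_n^*[\sone]=n\cdot[\sone]$ gives
\[
 \d(\phi_1,\phi_2)=\done^*[\sone]=-n\cdot g^*[\sone]\in n\cdot H^1(M;\Z),
\]
which is exactly the statement $\d(\phi_1,\phi_2)\equiv 0\ (\mbox{\rm mod }n)$.

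For the converse I would start from the equality of sheet numbers together with $\d(\phi_1,\phi_2)\in n\cdot H^1(M;\Z)$ and produce the required $g$ by lifting. The key observation is that $\varphi_n\co\sone\lra\sone$ is itself an $n$-fold covering, so the map $-\done\co M\lra\sone$ lifts through $\varphi_n$ if and only if $(-\done)_*(\pi_1 M)\subseteq n\Z=(\varphi_n)_*\pi_1(\sone)$. Via the universal coefficient isomorphism $H^1(M;\Z)\cong\Hom(H_1(M;\Z),\Z)$ this subgroup condition is equivalent to $n$ dividing $\done^*[\sone]=\d(\phi_1,\phi_2)$ in $H^1(M;\Z)$, which is precisely the hypothesis. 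The resulting lift $g$ defines the gauge transformation $\Psi(q)=g(\pi(q))\cdot q$, and by construction $\phi_2\circ\Psi=\phi_1$, so $\phi_1$ and $\phi_2$ are isomorphic as fibrewise coverings.

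The main obstacle is this converse: one has to pass from the purely cohomological divisibility of $\d(\phi_1,\phi_2)$ to an honest continuous fibrewise rotation realising the isomorphism on the nose, and this is exactly where I would invoke the covering-space lifting criterion for $\varphi_n$ together with the identification of divisibility in $H^1(M;\Z)$ with the subgroup condition on $\pi_1$. Note that, in contrast to Lemma~\ref{lem:homotopy}, no homotopy is needed once the lift exists; this also makes transparent how the isomorphism relation sits below the finer homotopy classification of Theorem~\ref{thm:coveringclassification}, the passage from homotopy to isomorphism being the quotient $H^1(M;\Z)\lra H^1(M;\Z)/n\cdot H^1(M;\Z)$.
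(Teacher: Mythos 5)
Your proof is correct, and while your forward direction is essentially the paper's, your converse takes a genuinely different route. In the forward direction both arguments identify an isomorphism of the coverings with a fibrewise rotation $q\mapsto g(\pi(q))\cdot q$ and read off divisibility of $\d(\phi_1,\phi_2)$; you streamline this by applying $(-)^*[\sone]$ directly to the relation $\varphi_n\circ g=-\done$, where the paper instead runs a universal-coefficient diagram chase. For the converse, the paper chooses $\alpha$ with $\d(\phi_1,\phi_2)=n\cdot\alpha$, realizes it by a bundle automorphism $\psi$, shows $\d(\phi_1\circ\psi,\phi_2)=0$ via Lemma~\ref{lem:additivity}, concludes from Lemma~\ref{lem:homotopy} and Theorem~\ref{thm:coveringclassification} that $\phi_1\circ\psi$ and $\phi_2$ are homotopic through fibrewise coverings, and then still has to upgrade ``homotopic'' to ``equal'' by redefining $\psi_2$ --- a step it relegates to a remark and leaves to the reader. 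You bypass the homotopy classification entirely: you solve $\varphi_n\circ g=-\done$ exactly, using the classical covering-space lifting criterion for $\varphi_n\co\sone\lra\sone$, and observe that the lifting obstruction $(-\done)_*(\pi_1 M)\subseteq n\Z$ translates, via the universal coefficient theorem, into precisely the hypothesis $\d(\phi_1,\phi_2)\in n\cdot H^1(M;\Z)$. This buys a more self-contained proof (no appeal to the classification machinery), produces the isomorphism on the nose with no homotopy left to absorb, and exposes the mod-$n$ condition as exactly the obstruction to lifting through $\varphi_n$; what the paper's route buys is the systematic reuse of its additivity and classification results, making the passage from the homotopy to the isomorphism classification --- the quotient $H^1(M;\Z)\lra H^1(M;\Z)/n\cdot H^1(M;\Z)$ that you note at the end --- part of a single formal framework. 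Two minor points you should spell out: the claim that $\Psi_p$ is a rotation needs the observation that $\Psi_p(\theta)-\theta$ is continuous with values in a coset of the discrete subgroup $\tfrac{1}{n}\Z/\Z\subset\sone$, hence constant (the paper glosses over the same point for its $\psi_2$), and the lifting criterion requires $M$ path-connected and locally path-connected, which is automatic for a connected closed manifold.
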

\begin{proof} Suppose that $\phi_i$, $i=1,2$, are isomorphic as
fiberwise $n$-fold covering maps and denote by $\psi\co Q\lra Q$ 
an isomorphism. It is easy to see that $\psi$ also commutes with
the bundle projection $Q\lra M$. Hence, $\psi$
is also an automorphism of the circle bundle $Q\lra M$. Every such
bundle isomorphism can be characterized by a map 
$\psi_2\co M\lra\sone$ by the equation
\[
  \psi(q)=\psi_2(\pi(q))\cdot q.
\]
Note that $\psi_2$ is well-defined
since $\psi$ maps fibers to fibers. We have to compute 
$\d(\phi_1,\phi_2)=\d(\phi_1,\phi_1\circ\psi)$. For a point 
$q\in Q$ consider
\[
  \f_{(\phi_1,\phi_1\circ \psi)}(q)\cdot \phi_1(q)=
  \phi_1(\psi(q))=\phi_1(\psi_2(q)\cdot q)=
  (n\cdot \psi_2(q))\cdot \phi_1(q),
\]
which implies that 
$\f_{(\phi_1,\phi_1\circ \psi)}(q)=n\cdot \psi_2(q)$. Consequently, given 
a loop $\gamma$ in $Q$ then 
$
 (\f_{(\phi_1,\phi_1\circ \psi)})_*([\gamma])
 =n\cdot(\psi_2)_*[\gamma]
$. 
Considering the diagram
\[
  \xymatrix@R=1cm@C=1cm{
  H^1(\sone;\Z)
  \ar[r]^{\hspace{-0.7cm}\cong}
  \ar[d]_{\smalld_{(\phi_1,\circ \phi_1\circ \psi_2)}^*} & 
  \mbox{\rm Hom}(H_1(\sone;\Z),\Z)
  \ar[d]^{((\smalld_{(\phi_1,\phi_1\circ \psi)})_*)^*}
  \ar@{=}[r]&
  \mbox{\rm Hom}(H_1(\sone;\Z),\Z)
  \ar[d]_{(n\cdot(\psi_2)_*)^*} &
  \ar[l]_{\hspace{0.7cm}\cong} 
  H^1(\sone;\Z)\ar[d]^{n\cdot\psi_2^*} \\
  H^1(Y;\Z)\ar[r]^{\hspace{-0.7cm}\cong} & 
  \mbox{\rm Hom}(H_1(Y;\Z),\Z)
  \ar@{=}[r] & 
  \mbox{\rm Hom}(H_1(Y;\Z),\Z) & 
  \ar[l]_{\hspace{0.7cm}\cong} 
  H^1(Y;\Z) 
}
\]
our discussion shows that the square in the middle commutes.
By the universal coefficient theorem the left and the right 
square of the diagram commute. Thus, 
\[
  \d(\phi_1,\phi_1\circ\psi_1)
  =n\cdot\psi_2^*[\sone]
\]
which makes it vanish modulo-$n$.

Conversely, given two fiberwise $n$-fold coverings 
$\phi_i\co Q\lra P$, $i=1,2$, such that 
$\d(\phi_1,\phi_2)$ vanishes modulo $n$ this means that 
$\d(\phi_1,\phi_2)=n\cdot\alpha$ for some suitable 
class $\alpha\in H^1(M;\Z)$. Denote by $\psi_2\co M\lra\sone$ 
a continuous map for which $\psi_2^*[\sone]=\alpha$. We
construct an associated automorphism $\psi$ of the bundle 
$Q\lra M$ by sending an element $q\in Q$ to 
$\psi(q)=\psi_2(\pi(q))\cdot q$. By the considerations 
from above, we get $\d(\phi_1,\phi_1\circ\psi)=n\cdot\alpha$. 
Consequently, we have
\[
  \d(\phi_1\circ\psi,\phi_2)
  =\d(\phi_1\circ\psi,\phi_1)
  +\d(\phi_1,\phi_2)
  =-n\cdot\alpha+n\cdot\alpha
  =0,
\]
where the first and second equality holds by 
Lemma~\ref{lem:additivity}.
Theorem~\ref{thm:coveringclassification} implies
that $\phi_1\circ\psi$ and $\phi_2$ are homotopic. 
So, after possibly defining $\psi$ (resp.~$\psi_2$) 
differently,  $\phi_1\circ\psi$ equals $\phi_2$ which finishes 
the proof.
\end{proof}
\begin{rem} The last step in the proof of 
Proposition~\ref{prop:characiso} used the fact that a 
homotopy of $\phi_1\circ\psi$ through fiberwise covering 
maps amounts to a homotopy of $\psi_2$. We leave the proof 
of this fact to the interested reader.
\end{rem}
In \cite{KlSa} we already identified a 
cohomology class which provides a classification of 
fiberwise covering maps up to isomorphism of coverings. 
However, the definition of the class provided there was on 
a very abstract level using spectral sequences and \v{C}ech 
cohomology. The following result shows that our geometrically 
defined class $\d(\phi_1,\phi_2)$ reduced modulo $n$ equals 
the class defined in \cite{KlSa} 
(see~\cite[Theorem~3.3]{KlSa} and cf.~\cite[Corollary~4.1]{KlSa}).
\begin{prop}\label{prop:relateold} Given two fiberwise 
$n$-fold coverings $\phi_i\co Q\lra P$, $i=1,2$, 
and let $\alpha_{\phi_i}\in H^1(M;\Z_n)$, $i=1,2$, 
be their associated classes defined via \v{C}ech cohomology 
using the approach in \cite{KlSa}. The classes
$\alpha_{\phi_1}-\alpha_{\phi_2}$ and $\d(\phi_1,\phi_2)\;\mbox{\rm mod } n$
are equal. 
\end{prop}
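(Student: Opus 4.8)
The plan is to unwind the abstract \v{C}ech definition of $\alpha_\phi$ from \cite{KlSa} into concrete local data and then compare it with $\done$. First I would fix a good cover $\{U_j\}$ of $M$ together with trivializations of $Q$ and $P$ over each $U_j$, so that the two bundles are recorded by transition functions $g_{jk},h_{jk}\co U_j\cap U_k\lra\sone$. Writing a fiberwise $n$-fold covering $\phi$ in these coordinates as $(p,\theta)\lmt(p,n\theta+c_j^\phi(p))$ for functions $c_j^\phi\co U_j\lra\sone$, compatibility on overlaps forces $h_{jk}=n\,g_{jk}+(c_k^\phi-c_j^\phi)$. Unravelling the spectral-sequence description of \cite[Theorem~3.3]{KlSa}, the class $\alpha_\phi\in H^1(M;\Z_n)$ is represented by the integer $1$-cocycle obtained by lifting the $c_j^\phi$ (and the transition functions) to $\R$ and reading off the resulting coboundary defect modulo $n$.

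Next I would form the difference $\alpha_{\phi_1}-\alpha_{\phi_2}$. The key simplification is that $Q$ and $P$ are the same bundles for both coverings, so $g_{jk}$ and $h_{jk}$ are common to $\phi_1$ and $\phi_2$; hence every term in the cocycle that involves the bundle transitions cancels in the difference, and only data built from $c_j^{\phi_2}-c_j^{\phi_1}$ survives. Reading the defining equation $\done(p)\cdot\phi_1(q)=\phi_2(q)$ of \S\ref{sec:hordist} in the chosen coordinates gives $\done(p)=c_j^{\phi_2}(p)-c_j^{\phi_1}(p)$ on each $U_j$, and the two overlap relations of the previous paragraph (one for each $\phi_i$) show that these local expressions agree on $U_j\cap U_k$, recovering precisely the global map $\done\co M\lra\sone$.

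Finally I would identify the surviving cocycle with the standard \v{C}ech representative of $\done^*[\sone]$: lifting $\done|_{U_j}$ to $f_j\co U_j\lra\R$, the locally constant integers $f_k-f_j$ represent $\done^*[\sone]=\d(\phi_1,\phi_2)\in H^1(M;\Z)$, and their reduction modulo $n$ is exactly what remains of $\alpha_{\phi_1}-\alpha_{\phi_2}$, yielding the claimed equality in $H^1(M;\Z_n)$. I expect the main obstacle to be bookkeeping the normalization: since the class in \cite{KlSa} is phrased through a spectral sequence and \v{C}ech cohomology, the real work is to check that its edge (connecting) map produces precisely the cocycle $f_k-f_j$ with the correct sign and with the factor $n$ appearing in the reduction rather than elsewhere; independence of the cover, the trivializations, and the lifts is routine but must be recorded so that the difference is genuinely the stated class. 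As a consistency check, both sides vanish exactly when $\phi_1$ and $\phi_2$ are isomorphic as coverings, in agreement with Proposition~\ref{prop:characiso} and \cite[Theorem~3.3]{KlSa}.
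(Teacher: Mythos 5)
Your proposal is correct in substance and sits at essentially the same level of completeness as the paper's own proof (which is explicitly labelled a sketch), but it follows a genuinely different route. The paper never chooses local trivializations of the bundles: it starts from the exact sequence of \cite[Theorem~1.1]{KlSa}, under which fiberwise $n$-fold coverings correspond to classes $\beta\in H^1(P;\Z_n)$ restricting to $[\varphi_n]$ on a fiber, any two such classes differing by $\pi^*(\alpha)$ for a unique $\alpha\in H^1(M;\Z_n)$; it then passes to the nerve $N(\mathcal{U})$ of a sufficiently fine cover of $M$, uses the universal coefficient theorem to view the resulting class $\alpha_{\mathcal{U}}$ as a homomorphism on $H_1(N(\mathcal{U});\Z)$, and argues that this homomorphism evaluates on $1$-cycles exactly as the horizontal distance does, the identification being compatible with the direct limit. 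You instead argue at the cocycle level: you encode each covering by local phases $c^{\phi_i}_j$ relative to trivializations, observe that the transition functions of $Q$ and $P$ are common to $\phi_1$ and $\phi_2$ and hence cancel in the difference, recover $\done$ as the globally well-defined difference $c^{\phi_2}_j-c^{\phi_1}_j$, and identify the surviving integer cochain $f_k-f_j$ with the standard \v{C}ech representative of $\done^*[\sone]$. These local computations (the overlap relation, the identification of $\done$, the representative of $\done^*[\sone]$) are all correct up to sign conventions. What the paper's route buys is that the class $\alpha_{\phi_1}-\alpha_{\phi_2}\in H^1(M;\Z_n)$ and its interaction with the $H^1(M;\Z_n)$-action come for free from the structures already established in \cite{KlSa}; what your route buys is transparency, since the cancellation mechanism and the appearance of $\done$ are made explicit. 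The one step you cannot verify blind --- that the spectral-sequence/\v{C}ech definition of $\alpha_\phi$ in \cite{KlSa} is indeed represented by the lifted-phase coboundary defect you posit --- is precisely the step the paper itself leaves to the reader (``with a little effort one can see''), so your deferral of the sign and normalization bookkeeping matches the paper's own level of detail rather than constituting a gap relative to it.
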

\begin{proof}[Sketch of the Proof] In \cite[Theorem~1.1]{KlSa} we gave a 
characterization of fiberwise $n$-fold coverings via the 
following exact sequence.
\[
  \xymatrix{
  0\ar[r] &
  H^1(M;\Z_n)\ar[r]^{\pi^*} &
  H^1(P;\Z_n)\ar[r]^{\iota^*} &
  H^1(\sone;\Z_n)\ar[r]^{d} &
  H^2(M;\Z_n).
  }
\]
We have shown that fiberwise coverings correspond to classes 
$\beta\in H^1(P;\Z_n)$ for which 
$\iota_*(\beta)=[\varphi_n]$ where $\varphi_n$ is the unique (up to homotopy)
connected $n$-fold covering of $\sone$. Thus, there is a simply 
transitive group action of $H^1(M;\Z_n)$ on the set of $n$-fold 
fiberwise coverings given via
\[
  \begin{array}{rclcl}
  H^1(M;\Z_n)&\times&(\iota_*)^{-1}([\varphi_n])
  &\lra&
  (\iota_*)^{-1}([\varphi_n])\\
  (\alpha&,&\beta)
  &\lmt & 
  \beta+\pi^*(\alpha).
  \end{array}
\]
Choose an open covering $\mathcal{U}$ of $M$ and let us 
denote by $N(\mathcal{U})$ the nerve of the covering. Without 
loss of generality the covering $\mathcal{U}$ can be chosen 
fine enough such that $\pi^*(\alpha)$ is the image of a 
suitable class, $\alpha_\mathcal{U}$ say, in 
\[
  H^1(N(\mathcal{U});\Z)
  \lra
  \varinjlim H^1(N(\mathcal{V}),\Z)
  =
  \varinjlim\mbox{\rm Hom}(H_1(N(\mathcal{V});\Z),\Z),
\]
where the last equality is provided by the universal 
coefficient theorem and the naturality of the universal 
coefficient theorem exact sequence. Therefore, the class 
$\alpha_\mathcal{U}$ can be seen as a morphism on the first 
homology of the nerve and, as such, it is characterized by 
its image on a basis of $H_1(N(\mathcal{U});\Z)$. 
With a little
effort one can see that $\alpha_\mathcal{U}$ on $H_1(N(\mathcal{U});\Z)$ 
has an analogous description as the horizontal distance 
on $H^1(M;\Z)$. Furthermore, it is not hard to see that this 
description is preserved by the direct limit, which 
shows that $\alpha_\mathcal{U}$ converges to a class whose
definition coincides with the definition of the horizontal distance.
\end{proof}

\section{Engel Structures on Circle Bundles}\label{sec:engelclass}
Given a circle bundle $Q\lra M$ over a closed oriented $3$-manifold $M$
with Engel structure $\mD$ whose characteristic line field is tangent to the
fibers, we associate to it the so-called development map
\[
  \phi_{\mD}\co (Q,\mD)\lra (\pxi,\mD\xi)
\]
where $(\pxi,\mD\xi)$ is the prolongation of the contact structure 
$\xi$ induced on the base space $M$ (see~\cite[\S 4]{KlSa}). We 
have seen in \cite{KlSa} that $\phi_\mD$ is a fiberwise covering 
map where the twisting 
number $tw(\mD)$ equals the number of sheets of this covering. An 
isotopy of Engel structures, i.e.~a homotopy through Engel structures 
whose characteristic line field is tangent to the fibers, leaves 
fixed the induced contact structure on the base. Hence, an isotopy 
of the Engel structure $\mD$ is equivalent to a homotopy of 
$\phi_\mD$ through fiberwise covering maps (see~Theorem~\ref{thm:first}). 
As a result, \S\ref{sec:class} provides us with a method to 
classify such Engel structures up to isotopy. 

Given another Engel structure $\mD'$ then to every loop $\gamma$ 
in $M$ we can associate an integer in the following way: Lift the 
loop $\gamma$ to a loop $\widetilde{\gamma}$ in $Q$. The Engel 
structures $\mD$ and $\mD'$ induce $\sone$-families of $1$-dimensional
subspaces of the trivial bundle 
$\left.\xi\right|_{\pi(\widetilde{\gamma})}$. These
paths of subspaces can be interpreted as sections of 
$\left.\pxi\right|_{\gamma}$. If 
we choose the path coming from $\mD$ as a reference, it provides us
with an identification $\left.\pxi\right|_{\gamma}\cong\gamma\times\sone$.
With this identification,  
to the path coming from $\mD'$ we can assign an integer by measuring 
the number times the path associated to $\mD'$ moves through the
$\sone$-fiber of $\left.\pxi\right|_{\gamma}$. This number shall be denoted by 
$\distinv(\mD,\mD')(\gamma)$.  In fact, we see that 
\begin{equation}
  \distinv(\mD,\mD')(\gamma)=\d(\phi_\mD,\phi_{\mD'})_*[\gamma].
  \label{eq:compare}
\end{equation}
Thus, the given assignment descends to a map
\[
  \distinv(\mD,\mD')\co H_1(M;\Z)\lra \Z.
\]
The invariant $\distinv$ is the invariant coming from the 
intuition that the Engel structure on a circle bundle with 
characteristic line field tangent to the fibers is basically given 
by the contact structure on a cross section, its twisting 
along fibers (vertical twisting), and its twisting along non-trivial 
loops in the base space (horizontal twisting, cf.~\S\ref{sec:exaengel}). The 
Equality~\eqref{eq:compare} indicates that this is an isotopy invariant 
and that it should be strong enough to give a 
classification.

\begin{proof}[Proof of Theorem~\ref{thm:first}] Given two Engel
structures $\mD_0$ and $\mD_1$ which are isotopic and denote by
$(\mD_t)_{t\in[0,1]}$ a corresponding homotopy through Engel 
structures. There exists an isotopy $\psi_t\co Q\lra Q$
such that $(\psi_t)_*\mD_0=\mD_t$ (see~\cite{Gol} or 
\cite[Theorem~3.50]{Vogel2}). Since the homotopy fixes the
characteristic line field, the isotopy $\psi_t$ is an automorphism
of the circle bundle $Q\lra M$. Denote by
$\mathcal{E}_t$ the associated even contact structure of $\mD_t$.
Since $\mathcal{E}_t=[\mD_t,\mD_t]$ by definition, we have
\[
  \mathcal{E}_t
  =
  [\mD_t,\mD_t]
  =
  [(\psi_t)_*(\mD_0),(\psi_t)_*(\mD_0)]
  =
  (\psi_t)_*[\mD_0,\mD_0]
  =
  (\psi_t)_*\mathcal{E}_0.
\]
Thus, 
\[
  \xi_t
  =
  \pi_*(\psi_t)_*(\mathcal{E}_0)
  =
  (\pi\circ\psi_t)_*\mathcal{E}_0
  =
  \pi_*\mathcal{E}_0
  =
  \xi_0.
\]
So, the homotopy of Engel structures apparently fixes the contact structure on the
base space. Therefore, the triangle
\[
 \xymatrix@R=1cm@C=1cm{
 Q\ar[rr]^{\psi_t}\ar[rd]_{\phi_{\mD_0}} & & Q\ar[ld]^{\phi_{\mD_t}}\\
 & \pxi_0 &
 }
\]
commutes for every $t\in[0,1]$, which shows that the development 
maps $\phi_{\mD_i}$, 
$i=0,1$, are homotopic through fiberwise covering maps. Thus,
$\distinv(\mD_0,\mD_1)=\d(\phi_{\mD_0},\phi_{\mD_1})=0$.

Conversely, if $\distinv(\mD_0,\mD_1)=\d(\phi_{\mD_0},\phi_{\mD_1})=0$, the 
two development maps $\phi_{\mD_0}$ and  $\phi_{\mD_1}$ are homotopic
through fiberwise covering maps. Denote by $(\phi_t)_{t\in[0,1]}$ the associated
homotopy and denote by
$\mD\xi_0$ the prolonged Engel structure on $\pxi_0$. For every $t\in[0,1]$
there exists a unique distribution of $2$-planes $\mD_t$ on $Q$ such that
$(\phi_t)_*\mD_t=\mD\xi_0$. The family of $2$-planes 
$(\mD_t)_{t\in[0,1]}$ defines a homotopy
of Engel structures from $\mD_0$ to $\mD_1$.
\end{proof}
An Engel structure 
$\mD$ on $Q$ with characteristic line field tangent to the fibers 
induces a contact structure $\xi$ on the base space $M$. To the 
contact structure we can associate its prolongation $(\pxi,\mD\xi)$. 
As noted at the beginning of this section, to the Engel structure 
$\mD$ we can associate the development map $\phi_\mD\co Q\lra\pxi$ 
which is a fiberwise $tw(\mD)$-fold covering. An isotopy of the 
Engel structure $\mD$, i.e.~a homotopy through Engel structures 
which fixes the characteristic line field, is equivalent to a 
homotopy of the development map through fiberwise covering 
maps (see~Theorem~\ref{thm:first}). 
Hence, the existence and classification of Engel structures up to 
isotopy --~where we consider Engel structures with characteristic line 
field tangent to the fibers~-- is equivalent to the existence and 
classification of fiberwise covering maps $Q\lra P$ up to homotopy 
through fiberwise coverings. In case of oriented Engel structures 
this approach has a refinement as follows: An orientable Engel 
structure induces a trivialization of the tangent bundle $TQ$ via 
vector fields $\partial_\mL$, $\partial_\mD$, $\partial_\mathcal{E}$ 
and $\partial_{TQ}$ where the first is tangent to the characteristic 
line field and the second lies inside the Engel structure such that
$\partial_\mL$ and $\partial_\mD$ define a trivialization of the 
Engel structure (see~\cite[Theorem~3.37]{Vogel2}). 
\begin{definition} The map $\phi_1\co Q\lra \xi_1$ which sends
a point $q\in Q$ to the 
element $\phi_1(q)\in (\xi_1)_q$ associated to the subspace 
$T_q\pi(\left.\partial_\mD\right|_q)$ is called 
{\bf oriented development map} of the Engel structure $\mD$.
\end{definition}
There is a 
canonical projection map $\pi_\xi\co\xi_1\lra\pxi$ and the triangle
\[
  \xymatrix@R=1cm@C=1cm{
  Q\ar[r]^{\phi_\mD}\ar[d]_{\phi_1} & \pxi \\
  \xi_1 \ar[ru]_{\pi_\xi}
}
\]
obviously commutes. This implies that the twisting 
number $n$ is even and that the equation 
$n/2\cdot e(Q)=e(\xi_1)=e(\xi)$ holds 
(see~Theorem~\ref{thm:coveringclassification}). 

Conversely, if the twisting 
number is even and the development map factorizes through the unit 
sphere bundle $\xi_1$, then $\mD$ is orientable. Since this 
factorization is canonical upon a choice of Riemannian metric on $Q$, 
an isotopy of oriented Engel structures 
corresponds to a homotopy of $\phi_1$ through fiberwise covering 
maps.
\begin{proof}[Proof of Theorem~\ref{thm:main}] 
An Engel structure $\mD^n(\xi)$ on $Q$ with twisting 
number $n$ and induced contact structure $\xi$ exists if 
and only if $n\cdot e(Q)=e(\pxi)$. Since $e(\pxi)$ equals 
$2\cdot e(\xi)$ we see that $\mD^n(\xi)$ exists if and only if 
$n\cdot e(Q)=2\cdot e(\xi)$ which proves the first part 
of the theorem.\vspace{0.3cm}

An orientable Engel structure on $Q$ with twisting number $n$ 
exists if and only if there is a fiberwise $n/2$-fold covering
map $Q\lra\xi_1$. This is equivalent to the equality 
$n/2\cdot e(Q)=e(\xi_1)$ (see~Theorem~\ref{thm:coveringclassification}). This 
proves the second part of the theorem.\vspace{0.3cm}

To prove the third statement, observe that there is a simply transitive
action of $H^1(M;\Z)$ on $\eng^n_\xi(Q)$ defined by
\[
  \Phi\co
  H^1(M;\Z)\times\eng^n_\xi(Q)\lra\eng^n_\xi(Q),
  \;(\alpha,\phi_\mD)\lmt\alpha\cdot\phi_\mD
\]
where the product is defined as in \eqref{eq:action}, $\mD$ is an 
Engel structure on $Q$ and $\phi_\mD$ its development map 
(see~Theorem~\ref{thm:coveringclassification} and Theorem~\ref{thm:first}).
Here, we implicitly used the fact that Engel structures with characteristic line
field tangent to the fibers and fiberwise covering maps can be considered
as equivalent objects.
The oriented Engel structures sit in $\eng^n_\xi(Q)$ as a
subset. More precisely, the projection $\pi_\xi$ induces a map
\[
  \iota_\xi\co
  \mbox{\rm Cov}_{\frac{n}{2}}(Q,\xi_1)\lra\eng^n_\xi(Q),
  \,\phi\lmt\pi_\xi\circ\phi
\]
such that $\im(\iota_\xi)=\eng^n_{\xi;o}(Q)$. Suppose that $\phi_\mD$ 
factorizes through $\xi_1$ via $\phi_1$, then
\[
 (2\alpha)\cdot\phi_\mD
 =
 (2\alpha)\cdot(\pi_\xi\circ\phi_1)
 =
 \pi_\xi\circ (\alpha\cdot\phi_1).
\]
Therefore, the diagram\vspace{0.3cm}
\[
  \xymatrix@R=1.0cm@C=0.2cm{
  \alpha\ar@/^0.7cm/@{|->}[rrrrr]\ar@{|->}[d]&H^1(M;\Z)\ar[rrr]^{\cdot 2}\ar[d]_\cong 
  &&& H^1(M;\Z)\ar[d]^\cong &2\cdot\alpha\ar@{|->}[d]\\
  \alpha\cdot\phi_1\ar@/_0.7cm/@{|->}[rrrrr]&\mbox{\rm Cov}_{\frac{n}{2}}(Q,\xi_1)\ar[rrr]^{\iota_\xi} 
  &&& \eng^n_\xi(Q) & \pi_\xi\circ(\alpha\cdot\phi_1)=(2\alpha)\cdot\pi_\xi\circ\phi_1
  }\vspace{0.5cm}
\]
is commutative, which shows that $\Phi$ descends to a simply-transitive
$2\cdot H^1(M;\Z)$-action on $\im(\iota_\xi)=\eng^n_{\xi;o}(Q)$,
which finishes the proof.
\end{proof}
\begin{proof}[Proof of Corollary~\ref{cor:main}] Since the
Euler class $e(M\times\sone)$ of the trivial circle bundle vanishes,
the equation
$
  n\cdot e(M\times\sone)=2\cdot e(\xi)
$
is fulfilled for every number $n$ and contact structure $\xi$ with 
Euler class/first chern class a $2$-torsion class. So, 
Theorem~\ref{thm:main} states that
$\eng^n_\xi(M\times\sone)$ bijects onto $H^1(M;\Z)$ for all pairs
 $(n,\xi)\in \Z\times\Xi_2(M)$ which proves the first part of the
corollary.\vspace{0.3cm}

Suppose we are given an oriented Engel structure $\mD$ on $M\times\sone$.
Since $\mD$ is oriented it comes equipped with a trivialization. Since 
$M\times\sone$ is a trivial circle bundle, by choosing a section of this
circle bundle, the trivialization of $\mD$ induces a trivialization of the 
induced contact
structure $\xi$ on the base space. So, the isotopy classes of oriented Engel
structures can be thought of as a subset of $2\Z\times\Xi_0(M)\times H^1(M;\Z)$.
Conversely, every isotopy class of Engel structures inside the subset
$2\Z\times\Xi_0(M)\times H^1(M;\Z)$ fulfills the equation
\[
  n/2\cdot e(M\times\sone)=e(\xi)
\]
and is therefore represented by an oriented Engel structure due to 
Theorem~\ref{thm:main}.
\end{proof}

\end{document}